\newtheorem{lemma}{Lemma}[section]
\newtheorem{remark}[lemma]{Remark}
\newtheorem{example}[lemma]{Example}
\newtheorem{theorem}{Theorem}
\newtheorem{conjecture}{Conjecture}
\newtheorem{problem}[lemma]{Problem}
\begin{document}
\newcommand{\eps}{{\varepsilon}}
\newcommand{\proofend}{$\Box$\bigskip}
\newcommand{\C}{{\mathbb C}}
\newcommand{\Q}{{\mathbb Q}}
\newcommand{\R}{{\mathbb R}}
\newcommand{\Z}{{\mathbb Z}}
\newcommand{\RP}{{\mathbb {RP}}}
\newcommand{\CP}{{\mathbb {CP}}}
\newcommand{\Tr}{\rm Tr}
\newcommand{\g}{\gamma}
\newcommand{\G}{\Gamma}
\newcommand{\e}{\varepsilon}
\newcommand{\kk}{\kappa}
\newcommand{\pr}{\mathrm{pr}}

\title{On conformal points of area preserving maps and related topics}

\author{Peter Albers\footnote{
Mathematisches Institut,
Universit\"at Heidelberg,
69120 Heidelberg,
Germany;
peter.albers@uni-heidelberg.de}
 \and 
 Serge Tabachnikov\footnote{
Department of Mathematics,
Pennsylvania State University,
University Park, PA 16802,
USA;
tabachni@math.psu.edu}
} 

\date{\today}

\maketitle

\begin{abstract}
In this article we consider area preserving diffeomorphisms of planar domains, and we are interested in their conformal points, i.e., points at which the derivative is a similarity. We present some conditions that guarantee existence of conformal points for the infinitesimal problem of Hamiltonian vector fields as well as for what we call moderate symplectomorphisms of simply connected domains. We also link this problem to the Carath\'eodory and  Loewner conjectures.
\end{abstract}

\section{Intoduction} \label{sec:intro}

A {\it conformal point} of a diffeomorphism $F: \R^2\to\R^2$ is a point at which the derivative $dF$ is a similarity. In this article we consider area preserving diffeomorphisms of planar domains, and we are interested in their conformal points. Although such an area preserving diffeomorphism may be free of conformal points (see Example \ref{example_Panvov} below), we present some conditions on $F$ that guarantee their existence.

There are several motivations for this study. It was sparked by a question posed on MathOverflow \cite{MO1}, apparently motivated by the elasticity theory: {\it Is it true that an area preserving diffeomorphism of  the closed unit disc $D^2\subset \R^2$ possesses conformal points?}

We start with the infinitesimal version of the problem where a symplectomorphism is replaced by a Hamiltonian vector field. In terms of the Hamiltonian function $H(x,y)$, a point $(x,y)$  is conformal if two conditions on the second partial derivatives hold: 
\begin{equation} \label{eq:mainv}
H_{xx}(x,y)=H_{yy}(x,y),\ H_{xy}(x,y)=0.
\end{equation}
There are two ways to interpret  conditions (\ref{eq:mainv}). The first is to consider the Hessian
$$
\mathit{Hess}(H)=\begin{pmatrix}
H_{xx}&H_{yx}\\
H_{xy}&H_{yy}
\end{pmatrix}.
$$
This symmetric matrix has two orthogonal eigendirections, giving rise to two fields of directions, and a conformal point is a singular point of these fields,  occurring when the Hessian is a scalar matrix. 

This is similar to the fields of principal directions on a surface in $\R^3$. The singularities of theis field are the umbilic points of the surface, the points where the two principal curvatures are equal. This connection to classical differential geometry provides a  context and it  is one of our  motivations. 

The famous Carath\'eodory conjecture asserts that a smooth closed surface in $\R^3$ homeomorphic to the sphere has at least two {\it distinct} umbilic points. The sum of indices of the singular points of the field of principal direction equals 2, the Euler characteristic of $S^2$. Since the indices of singular points of line fields are half-integers, one concludes that the algebraic number of umbilic points equals 4. 
See Figure \ref{ellipsoid} for the umbilic points on a triaxial ellipsoid.

\begin{figure}[ht]
\centering
\includegraphics[width=.5\textwidth]{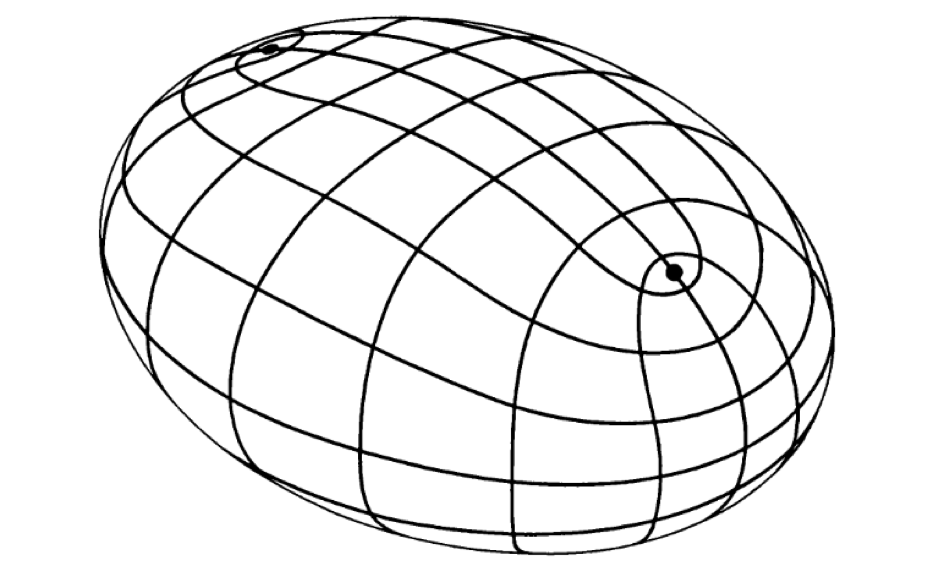}
\caption{The four umbilic points, two of which are visible, and the lines of curvature on an ellipsoid. This description is due to G. Monge, see \cite{SG} for a historical overview.
}
\label{ellipsoid}
\end{figure}

The  Carath\'eodory conjecture has a long and complicated history, see, e.g., \cite{GH,SG} and the references therein. The main approach to it  in the literature is local: conjecturally, the index of a singular point of the field of principal directions does not exceed 1; this conjecture implies the Carath\'eodory conjecture.
Examples of umbilics with every index not exceeding 1 are known.

Another way to interpret conditions (\ref{eq:mainv}) is to consider the vector field
\begin{equation} \label{eq:V}
V=(H_{yy}-H_{xx})\ \partial_x - 2H_{xy}\ \partial_y
\end{equation}
whose zeros are the conformal points. 
The components of the vector field  $V$ are the real and imaginary parts of $(\partial_x+i\partial_y)^2 H(x,y)$, where  $\partial_x+i\partial_y$ is the Cauchy-Riemann operator.

Loewner's conjecture concerns the indices of isolated zeros of planar vector fields $V_n$ whose two components are the real and imaginary parts of  $(\partial_x+i\partial_y)^n H(x,y)$, where $H(x,y)$ is a smooth function. We call these zeros {\it Loewner points}. Loewner's  conjecture states that the index of  an isolated zero of $V_n$ does not exceed $n$, see \cite{Tit}. 
For $n=1$, this is the well known statement that the index of an isolated zero of a gradient vector field is not greater than 1. 

In the case $n=2$, Loewner's conjecture is intimately related with the Carath\'eodory conjecture. In addition to differential geometry,  Loewner's conjecture is related with  hydrodynamics, see Section 3 of \cite{SX}. 
The connection of conformal points to the Loewner conjecture is another motivation for this study. 
\smallskip

Let us formulate our results. 

In Section \ref{sec:fields} we prove that if $H$ is a smooth function in a simply connected domain $D\subset \R^2$ such that $dH$ vanishes along the boundary and the system (\ref{eq:mainv}) has no solutions on $\partial D$, then there are two solutions inside $D$, counting with multiplicities. Under the same assumptions on $H$, we extend this result to conformal points of the Hamiltonian vector field of $H$ in a simply connected 2-dimensional domain with a general Riemannian metric. 

In Section \ref{subsec:L} we extend this result to Loewner points:
 if  a smooth function $H$ in a simply connected domain $D\subset \R^2$ is constant on the boundary, its first $n-1$ normal derivatives vanish, and the respective vector field $V_n$ has no zeros  on $\partial D$, then there are $n$ Loevner points inside $D$, counting with multiplicities. 

In Section \ref{sec:moderate} we consider conformal points of symplectomorphisms of simply connected domains $D\subset \R^2$. We prove that if a symplectomorphism preserves $\partial D$ pointwise and is {\it moderate} then it  possess two conformal points, counted with multiplicity. The  definition of a moderate symplectomorphism is given in Section \ref{sec:moderate}; informally speaking, a moderate symplectomorphism is sufficiently close, with its first derivatives, to the identity map. 

In this sense, this result is akin to the first results on the existence of symplectic fixed points, before the advent of the pseudo-holomorphic curves and the Floer homology. We wonder whether the existence of conformal points can be tackled using the techniques of the modern symplectic topology.  

Section \ref{sec:compl} concerns various interpretations of the Carath\'eodory conjecture and its  relation to conformal points. 
We show that, in a proper sense, umbilic points are conformal points of a Hamiltonian vector field on $S^2$, and we conjecture that an area preserving diffeomorphism of $S^2$ possesses at least two distinct conformal points.

\section{Conformal points of Hamiltonian vector fields} \label{sec:fields} 

Let $D\subset \R^2$ be a connected and simply connected domain $D$ with non-empty interior, bounded by a smooth closed curve $\gamma$. Let $H: D \to \R$ be a smooth function defined in a neighborhood of $D$ and which is constant on $\gamma$. Thus, its Hamiltonian vector field $X_H=H_y \partial_x-H_x \partial_y$ is tangent to the boundary $\gamma$. 
We  ask if the time-$\e$-flow $F_\e:D\to D$ of $X_H$ has conformal points. That this is not true without further assumptions is illustrated by the following example suggested to us by D. Panov.

\begin{example}\label{example_Panvov} 
{\rm Let $\g$ be the ellipse $x^2+cy^2=1$ and $H(x,y)=x^2+cy^2$. If $c\ne 1$, the Hamiltonian vector field $H_y \partial_x-H_x \partial_y$ has no conformal points, i.e.,~the 
infinitesimal symplectomorphism given by
$$
F: (x,y) \mapsto (x+\e H_y, y-\e H_x)=(x+2\e cy, y-2\e x).
$$
has no conformal points. By continuity, it follows that, for sufficiently small $\e\neq0$, the  time-$\e$-flow of this Hamiltonian vector field is free from conformal points as well.
}
\end{example}

This example points towards a difference between this question and say a fixed-point problem. Indeed, the time-$\e$-flow $F_\e$ of an autonomous vector field $X$ is fixed-point free for all sufficiently small $\e>0$ if and only if $X$ has no zeros. However, it is unclear to us if the existence of a conformal point for the Hamiltonian vector field forces the existence of conformal point for the time-$\e$-flow. Nevertheless, the infinitesimal question is interesting on its own right and links to the Carath\'eodory and Loewner conjectures as we mentioned in the introduction. We return to the question for time-$\e$-flows in Section \ref{sec:moderate}.

\begin{problem}
Is there an example of a Hamilton function $H:D\to\R$ such that the Hamiltonian vector field has a conformal point but for all sufficiently small $\e\neq 0$ the time-$\e$-flow of $H$ has no conformal point?
\end{problem}

Let us now consider a general infinitesimal symplectomorphism
$$
F: (x,y) \mapsto (x+\e H_y, y-\e H_x)
$$
generated by $H:D\to\R$ with $H$ being constant on $\g$. The Jacobian of $F$ is
$$
\begin{pmatrix}
	1+\e H_{xy}&\e H_{yy}\\
	-\e H_{xx}&1-\e H_{xy}
\end{pmatrix}.
$$
$F$ has conformal points, by definition, if this Jacobian matrix is an infinitesimal similarity, that is, if and only if 
\begin{equation*} \label{eq:maineq}
H_{xx}=H_{yy},\ H_{xy}=0,
\end{equation*}
(equation (\ref{eq:mainv}), i.e., if the Hessian $\mathit{Hess}(H)$ of $H$ is a scalar matrix. 

Let $\g(t)$ be an arc length parameterization of the boundary of $D$, and let $k(t)$ be the curvature of $\g(t)$. Assume that $\g$ is oriented as the boundary of $D$ and 
let $n(t)$ be the inward unit normal vector field along $\g$. A sufficiently small neighborhood of $\g$ is parameterized as $\g(t)+sn(t), s\in (-\eps,\eps)$. Near $\g$ we write  the Hamiltonian function as $H(t,s)$.

\begin{theorem} \label{thm:two}
Let the function $H:D\to\R$ be constant on the boundary $\g$ of $D$. We assume that the system of equations \eqref{eq:mainv} has isolated solutions in $D$, none of which lie on the boundary of $D$. If $H$ satisfies one of the following  conditions
\begin{enumerate}  \renewcommand{\theenumi}{\roman{enumi}}\renewcommand{\labelenumi}{\theenumi)}  \itemsep=0ex
\item $H_{ss}(t,0)+k(t) H_s(t,0)$ does not vanish on $\g$,
\item $H$ has zero normal derivative along $\g$,
\end{enumerate}
then \eqref{eq:mainv} has  two solutions in the interior of $D$, counting with multiplicities.
\end{theorem}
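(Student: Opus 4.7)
The plan is to apply the Poincar\'e--Hopf / argument-principle formula for planar vector fields to
$V = (H_{yy}-H_{xx})\partial_x - 2H_{xy}\partial_y$
of \eqref{eq:V}, whose zeros are exactly the solutions of \eqref{eq:mainv}. Since by assumption the zeros are isolated and none lies on $\gamma$, the sum of their indices in $D$ equals the winding number of $V|_\gamma$ about the origin, and the whole task reduces to showing that this winding number is $2$.

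To compute the winding I identify $V$ with the complex-valued function $-(\partial_x+i\partial_y)^2 H = -[(H_{xx}-H_{yy}) + 2iH_{xy}]$. At each boundary point $\gamma(t)$, let $(T(t),N(t))$ denote the positively oriented tangent--normal frame, and let $\theta(t)$ be the angle $T(t)$ makes with $\partial_x$. Under this change of frame one has $\partial_x+i\partial_y = e^{i\theta}(\partial_T+i\partial_N)$, whence
$(H_{xx}-H_{yy}) + 2i H_{xy} = e^{2i\theta}\bigl[(H_{TT}-H_{NN})+2i H_{TN}\bigr],$
where the right-hand subscripts denote Euclidean second derivatives in the $(T,N)$ frame. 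The next step is to express $H_{TT}, H_{NN}, H_{TN}$ in terms of the Fermi-coordinate data along $\gamma$: since $H$ is constant on $\gamma$ one has $H_T\equiv 0$, and differentiating a second time while invoking the Frenet relation $T' = kN$ gives $H_{TT} = -k\,H_s$; a similar chain-rule calculation using $N' = -kT$ together with $H_T = 0$ on $\gamma$ produces $H_{TN} = H_{ts}$; and $H_{NN} = H_{ss}$ because the Fermi normal lines are Euclidean straight lines. Substituting, one obtains the boundary formula
$V(\gamma(t)) = e^{2i\theta(t)}\bigl((H_{ss}+k\,H_s) - 2i H_{ts}\bigr)(t,0).$

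Because $\gamma$ is a positively oriented simple closed curve, $\theta$ increases by $2\pi$ as $t$ traverses $\gamma$, so the factor $e^{2i\theta}$ contributes exactly $2$ to the winding number of $V|_\gamma$. It then suffices to check that the second factor $(H_{ss}+kH_s) - 2iH_{ts}$ has winding $0$ under each of the two hypotheses. Under (i), the real part $H_{ss}+kH_s$ is nowhere zero on the connected curve $\gamma$, hence of constant sign, so the second factor stays in a fixed open half-plane and its winding is $0$. Under (ii), the identity $H_s\equiv 0$ on $\gamma$ kills the $kH_s$ term and also forces $H_{ts}=\partial_t(H_s)=0$, so the second factor reduces to the real function $H_{ss}(t,0)$; the hypothesis that no conformal points lie on $\gamma$ then forces $H_{ss}$ to be nowhere zero, hence of constant sign, and again the winding is $0$. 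In either case the total winding of $V|_\gamma$ equals $2$, and Poincar\'e--Hopf delivers the claimed two conformal points counted with multiplicities.

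The only real subtlety is bookkeeping: the boundary curvature $k$ enters twice, once via the Frenet contribution to $H_{TT}$ and once via the rotation phase $e^{2i\theta}$, and the signs in both occurrences must be tracked consistently. After that, the winding-number count is direct, and both hypotheses feed into the same ``constant-sign'' mechanism for the remaining factor.
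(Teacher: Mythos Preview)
Your proof is correct and follows essentially the same route as the paper: compute $V$ along $\gamma$ in a moving frame, factor out $e^{2i\alpha}$, and argue that the remaining factor has winding number $0$ under each hypothesis so that Poincar\'e--Hopf yields index sum $2$. The paper arrives at the identical boundary formula $V=(H_{ss}+kH_s-2iH_{st})e^{2i\alpha}$ and handles the two cases exactly as you do; it also records an alternative argument for case (ii) via the eigendirection line field of $\mathit{Hess}(H)$, but your direct winding-number version is the primary one given there as well.
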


Before giving a proof, we make two comments. First, since $H$ is constant on $\g$, one can write $H(t,s)=c+sh(t,s)$, where $h$ is a smooth function. Then along the boundary $\g$ of $D$
$$
H_{ss}(t,0)+k(t) H_s(t,0) = 2h_s(t,0)+k(t)h(t,0).
$$
If $\g$ has positive curvature, that is, $k>0$, one can always add a constant to $h$ to ensure that $H_{ss}(t,0)+k(t) H_s(t,0)>0$ for all $t$. 


Second, since $H$ is constant on $\g$, having zero normal derivative is equivalent to $dH=0$ on $\g$, that is, to the Hamiltonian vector field $H_y \partial_x-H_x \partial_y$ being identically zero on the boundary. 

\begin{proof}
We consider the vector field 
\begin{equation*} \label{eq:V}
V=(H_{yy}-H_{xx})\ \partial_x - 2H_{xy}\ \partial_y
\end{equation*}
(equation (\ref{eq:V}). By assumption, $V$ has finitely many zeros in $D$, none of which lie on the boundary. We want to show that $V$ has at least two zeros inside $D$, counted with multiplicities. For that purpose we compute the vector field $V$ along $\g$ in $(t,s)$-coordinates.

Let $\alpha(t)$ be the direction of $\g(t)$, that is, $\g_t=(\cos\alpha,\sin\alpha)$. Then $k=\alpha_t$ and  $n=(-\sin\alpha,\cos\alpha)$. Combining the assumption that $H$ is constant on $\g$, i.e., $H_t(t,0)=H_{tt}(t,0)=0$, with a calculation using the chain rule yields
\begin{equation*}
\begin{split}
V(t,0) = \;\;\;\;&[(H_{ss}(t,0)+k(t) H_s(t,0))\cos 2\alpha + 2 H_{st}(t,0) \sin 2\alpha]\ \partial_x \\[.5ex]
+&[(H_{ss}(t,0)+k(t) H_s(t,0))\sin 2\alpha - 2 H_{st}(t,0) \cos 2\alpha]\ \partial_y,
\end{split}
\end{equation*} 
or, in complex notation,
$$
V=(H_{ss}+k H_s - 2i H_{st})  e^{2i\alpha}.
$$
Under the first assumption, i.e.,~if $H_{ss}(t,0)+k(t) H_s(t,0)\neq 0$  for all $t$, one has that  $V$ has non-zero inner product with the vector field $\cos(2\alpha)\partial_x + \sin(2\alpha)\partial_y$. The latter has rotation number 2 along $\g$, hence the rotation number of $V$ also equals 2. The Poincar\'e-Hopf theorem implies that $V$ has two zeros inside $D$, counting multiplicities.

Under the second assumption, i.e.,~if $H$ has zero normal derivative along $\g$, then $H_s(t,0)=H_{st}(t,0)=0$, and $V=H_{ss} e^{2i\alpha}$. By assumption, $V$ has no zeros on the boundary, i.e., in this case, $H_{ss}$ does not vanish on the boundary. In particular, $V$ has again rotation number 2 along $\g$ and we conclude as above finishing the proof of Theorem \ref{thm:two}.

Let us present an alternative argument in the second case. Instead of $V$ we consider the Hessian matrix $\mathit{Hess}(H)$. At a point where this matrix is not scalar, i.e.,~at a non-conformal point, it has two distinct eigenvalues with orthogonal eigendirections. One may distinguish between the two eigendirections by the value of the eigenvalues. Thus one obtains two orthogonal line fields whose common singularities are the conformal points. 

The rotation number of a line field along a closed curve is its number of the complete turns divided by 2. In particular, the indices of singular points of line fields are half-integers. This convention is consistent with the rotation number for vector fields: when a non-oriented line makes a full turn, the respective oriented line makes only half a turn. See Figure \ref{ind} for a singularity of a line field with index $\tfrac12$.

\begin{figure}[ht]
\centering
\includegraphics[width=.25\textwidth]{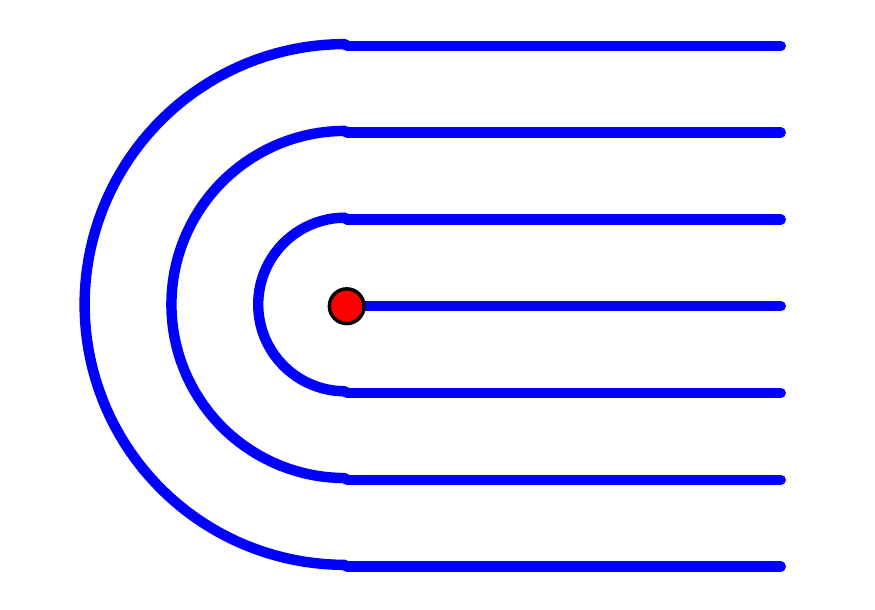}
\caption{A singularity of a line field with index $\tfrac12$.}
\label{ind}
\end{figure}

The Poincar\'e-Hopf theorem holds also for line fields: the rotation number on the boundary of $D$ equals the sum of indices of its singularities inside $D$. 

As explained above, the second assumption is equivalent to $dH=0$ on the boundary curve $\g$, i.e.,~$\g$ is a critical curve of the function $H$. Therefore the tangent line to $\g$ is in the kernel of $\mathit{Hess}(H)$, hence it is an eigendirection. If there are no conformal points on the boundary, then $\g$ is an integral curve of one of the two fields of eigendirections of $\mathit{Hess}(H)$. It follows that the rotation number of this field around $\g$ is 1, and by the Poincar\'e-Hopf theorem it has to have singular points  inside $D$. Since their indices are half-integers, one has at least two such points, counted with multiplicities.
\end{proof}

\begin{remark}
{\rm We saw that the vector field $V$ rotates twice as fast as the  field of eigendirections of $\mathit{Hess}(H)$. This fact is known in the literature on the Carath\'eodory conjecture, see, e.g., \cite{SX}. 
}
\end{remark}

The next example shows that, under the assumptions of Theorem \ref{thm:two}, one may have exactly one conformal point (necessarily of multiplicity 2). We also discuss a limitation of the above argument if we allow for conformal points on the boundary of $D$.

\begin{example}
{\rm Let $D$ be the closed unit disc, and let $H(\alpha,r)=r^a-\frac{a}{2} r^2$, where $(\alpha,r)$ are polar coordinates on $D$ and $a>2$. This function is at least of class $C^2$ (and if $a$ is an even integer, it is a polynomial), it is constant and has zero normal derivative on the boundary $r=1$.

A calculation similar to the one in the proof of Theorem \ref{thm:two} yields
$$
V = \left(H_{rr}-\frac{1}{r}H_r \right) (\cos 2\alpha,\sin 2\alpha) = a(a-2)r^{a-2} (\cos 2\alpha,\sin 2\alpha).
$$
This vector field $V$ is non-singular in the punctured disc, and its only zero is the origin. 

We now consider $H$ as a function on  $D':=\{z\in \C\mid |z-1|\leq 1\}$, the unit disk around $1\in\C$. Then $H$ has no conformal points in the interior but one of multiplicity 2 on the boundary. Moreover, the vector field $V$ has precisely one zero which lies on the boundary. 

As in the alternative argument above, one is tempted to consider the singular line field generated by the vector field $V$ (with one singularity in $0\in \partial D'$) since it has a continuous extension to a non-singular line field along the boundary, namely the line field generated by the vector field $(\cos 2\alpha,\sin 2\alpha)$. Now the conclusion from the Poincar\'e-Hopf theorem seems to be that our example necessarily has two conformal points in the interior of $D'$ which is wrong as explained above. The error lies in the fact that, while the line field indeed has a non-singular continuous extension to the boundary given by $(\cos 2\alpha,\sin 2\alpha)$, there is no continuous extension to all of $D'$. 

It is unclear to us what the correct statement of Theorem \ref{thm:two} is if we allow for zeros of $V$, i.e.,~conformal points, on the boundary.
}
\end{example}

\begin{problem}
Is there a Hamiltonian function $H:D\to\R$ such that the Hamiltonian vector field has only one conformal point with multiplicity 1 on the boundary of $D$?
\end{problem}

The circle of questions makes sense on other surfaces. Since there are no topological obstructions on the torus coming from the Poincar\'e-Hopf theorem we pose the following problem.

\begin{problem}
Let $H(x,y)$ be a function on the torus $\R^2/L$ where $L$ is a lattice. Does the respective Hamiltonian vector field necessarily have conformal points? What about Hamiltonian symplectomorphisms? 
\end{problem}

We point out that elements from $SL(2,\Z)$ acts on $\R^2/\Z^2$ by area-preserving maps and, except for the $\pm$identity, none of them have conformal points. However, again except for the identity, none of them are Hamiltonian diffeomorphisms, i.e., generated by a Hamiltonian function on $\R^2/\Z^2$.

Finally, we extend case ii) of Theorem \ref{thm:two} to domains $D\subset \R^2$ as above but equipped with a general Riemannian metric. This metric induces a symplectic form and a conformal structure, and we may again consider conformal points of Hamiltonian vector fields. 

\begin{theorem} \label{thm:twoR}
Let $dH=0$ on the boundary $\g$ so that the Hamiltonian vector field $X_H$ of $H$ has no conformal points on $\g$. Then $X_H$ has two conformal points in the interior of the domain, counting with multiplicities.
\end{theorem}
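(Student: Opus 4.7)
The plan is to generalize the alternative, line-field argument used for case \emph{ii)} of Theorem \ref{thm:two}. The crucial point is to show that in the Riemannian setting the conformal points of $X_H$ coincide with the \emph{umbilic} points of the Riemannian Hessian $\nabla^2 H$, i.e., the points where $\nabla^2 H$ is a scalar multiple of $g$. Since any oriented Riemannian surface is K\"ahler with $\omega$ the area form and $J$ rotation by $\pi/2$, one has $\nabla J = 0$ and $X_H = -J\,\nabla H$, hence $\nabla X_H = -J\,\nabla^2 H$. A conformal point of $X_H$ is by definition a point at which the symmetric trace-free part of $\nabla X_H$ vanishes (the infinitesimal conformal Killing condition $\mathcal{L}_{X_H} g \propto g$). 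A short linear-algebra check shows that for a symmetric $2\times 2$ matrix $A$, the product $JA$ has symmetric trace-free part equal to $J$ applied to the symmetric trace-free part of $A$; thus the conformal Killing condition on $X_H$ is equivalent to the vanishing of the trace-free part of $\nabla^2 H$, i.e., $\nabla^2 H \propto g$. Alternatively, the equivalence can be verified directly in global isothermal coordinates $g = e^{2\phi}(dx^2+dy^2)$.

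Having identified conformal points with Hessian umbilics, I would next analyze $\nabla^2 H$ along $\g$. Because $dH \equiv 0$ on $\g$, the Christoffel corrections in the intrinsic Hessian vanish pointwise on $\g$, and differentiating $dH = 0$ tangentially yields $\nabla^2 H(T,\cdot) = 0$ along $\g$, where $T$ is the unit tangent. Thus $T$ is a zero eigenvector of $\nabla^2 H$ along $\g$ and the unit normal $N$ is the other eigenvector; the hypothesis that no conformal points lie on $\g$ is equivalent to $\nabla^2 H(N,N) \neq 0$ along $\g$. Consequently, in a neighborhood of $\g$ the Hessian admits two smooth, orthogonal eigenline fields, one of which restricts to the tangent line field of $\g$.

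Finally, using the global isothermal coordinates to identify $D$ with a simply connected planar domain, the tangent line field of $\g$ has rotation number $1$. By the Poincar\'e-Hopf theorem for line fields, the Hessian eigenline field that extends $T\g$ must therefore have interior singularities whose indices sum to $1$. Since line-field indices are half-integers, this yields at least two conformal points of $X_H$ in the interior of $D$, counted with multiplicity (multiplicity being twice the line-field index), as desired. The main obstacle is the first step, namely the clean identification of conformal points of $X_H$ with umbilic points of $\nabla^2 H$; conceptually this is transparent via $\nabla X_H = -J\nabla^2 H$ and the behavior of $J$ on symmetric matrices, but some care is needed with the trace-free/symmetric decomposition. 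Once this identification is in hand, the remaining steps are a direct Riemannian adaptation of the planar line-field argument.
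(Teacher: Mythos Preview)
Your argument is correct and is a genuinely different route from the paper's. The paper works entirely in global isothermal coordinates $g=\tfrac{1}{g(x,y)}(dx^2+dy^2)$: it writes the Hamiltonian vector field as $g(H_y\partial_x-H_x\partial_y)$, packages the conformality conditions into the vector field
\[
V=[(gH_y)_y-(gH_x)_x]\,\partial_x-[(gH_x)_y+(gH_y)_x]\,\partial_y,
\]
and then observes that on $\gamma$, where $dH=0$, the first-order terms in $g$ drop out and $V$ reduces to $g$ times the flat vector field $(H_{yy}-H_{xx})\partial_x-2H_{xy}\partial_y$. Since multiplication by the positive scalar $g$ does not affect the rotation number, one is back to the flat computation of Theorem~\ref{thm:two} with rotation number~$2$.

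Your approach instead gives a coordinate-free identification of conformal points of $X_H$ with umbilics of the Riemannian Hessian via $\nabla X_H=-J\nabla^2H$ (your $2\times2$ check that the symmetric trace-free part of $JA$ vanishes iff $A$ is scalar is exactly right), and then runs the line-field version of Poincar\'e--Hopf, paralleling the paper's \emph{alternative} proof of case~ii) in the flat theorem rather than its $V$-field proof. What you gain is conceptual clarity---the Riemannian content is isolated in the single identity $\nabla X_H=-J\nabla^2H$, and the boundary analysis (``$dH=0$ on $\gamma$ forces $T\gamma$ to be a null eigenline of $\nabla^2H$'') is manifestly intrinsic. What the paper's proof gains is brevity: once one commits to isothermal coordinates, the reduction to the flat case is a two-line computation with no need to discuss $\nabla^2H$ or the $J$-intertwining. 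Both arguments ultimately invoke the same topological input (rotation number $1$ for the eigenline field, equivalently $2$ for $V$), so they are two presentations of the same idea rather than independent proofs.
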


\begin{proof}
Let the metric be given in the conformal form $\tfrac{1}{g(x,y)} (dx^2+dy^2)$, where $g$ is a positive function. Then the  symplectic form is $\tfrac{1}{g(x,y)}dx \wedge dy$, and therefore the Hamiltonian vector field of a function $H(x,y)$ is $g(x,y) (H_y \partial_x - H_x \partial_y)$. A point $(x,y)$ is conformal for the Hamiltonian vector field if and only if
\begin{equation*} \label{eq:Riem}
(gH_x)_x=(gH_y)_y,\ (gH_y)_x=-(gH_x)_y.
\end{equation*}
As before, consider the vector field
$$
V=[(gH_y)_y-(gH_x)_x]\ \partial_x -[(gH_x)_y+(gH_y)_x]\ \partial_y.
$$
If $dH=0$ on the boundary then, on $\g$, one has 
$$
V=g [(H_{yy}-H_{xx})\ \partial_x - 2H_{xy}\ \partial_y].
$$
This field still has rotation number 2 along $\g$, and the rest of the proof is the same as that of Theorem \ref{thm:two}.
 \end{proof}

\section{Loewner points} \label{subsec:L}

Consider the vector fields $V_n$ whose components are the real and imaginary parts of  the function 
$(\partial_x+i\partial_y)^n H(x,y)$.  One has $V_1= \nabla H$, and $-V_2$ is the vector field $V$ that we considered above.
Recall that  zeros of $V_n$ are termed Loewner points. 

We extend case (ii) of Theorem \ref{thm:two} to Loewner points. Let $D$ be as in Theorem \ref{thm:two}.

\begin{theorem} \label{thm:n} 
Assume that the function $H$ is constant on the boundary $\g$ and  has 
vanishing $j$-th normal derivatives along $\g$  for  $j=1,\ldots,n-1$. Moreover, assume that  
the vector field $V_n$ has only isolated zeros in $D$ and no zeros on the boundary.
Then $V_n$ has $n$ zeros in the interior of $D$,  counting with multiplicities.
\end{theorem}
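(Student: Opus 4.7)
The plan is to mimic the proof of Theorem \ref{thm:two}(ii): compute $V_n$ along $\g$, show that its direction winds $n$ times as we traverse $\g$ once, and conclude by the Poincar\'e-Hopf theorem. The convenient formalism is the identification of $V_n$ with the complex-valued function $2^n\,\partial_{\bar z}^n H$, where $\partial_{\bar z}=\tfrac12(\partial_x+i\partial_y)$; then the rotation number of $V_n$ along $\g$ becomes the winding of the complex phase of $\partial_{\bar z}^n H$.

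First I would recast the boundary hypothesis in a coordinate-invariant form: the vanishing of $H-c$ together with its first $n-1$ normal derivatives along $\g$ is equivalent to the factorization $H=c+\rho^n g$ on a neighborhood of $\g$, where $\rho$ is a smooth defining function for $\g$ (for instance $\rho=s$ in Fermi coordinates) and $g$ is smooth, with $g|_\g$ a nonzero real multiple of the $n$-th normal derivative $\partial_s^n H(\cdot,0)$. Next I would apply the Leibniz rule to $\partial_{\bar z}^n(\rho^n g)$ along $\g$: for $k<n$ the factor $\partial_{\bar z}^k(\rho^n)$ vanishes on $\g$ because $\rho^n$ vanishes there to order $n$; for $k=n$ one has $\partial_{\bar z}^n(\rho^n)|_\g=n!\,(\partial_{\bar z}\rho)^n|_\g$, which one sees by Taylor-expanding $\rho^n$ at a boundary point $p$ and reading off the coefficient of $(\bar z-\bar p)^n$. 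Hence, along $\g$,
$$
2^n\,\partial_{\bar z}^n H = 2^n\,n!\,(\partial_{\bar z}\rho)^n\,g.
$$

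To read off the winding, note that $\rho$ is real and (taking $\rho>0$ in $D$) $\nabla\rho$ is a positive multiple of the inward unit normal along $\g$. In complex notation that normal is $ie^{i\alpha(t)}$, so $\partial_{\bar z}\rho|_\g=\tfrac12(\rho_x+i\rho_y)|_\g$ is a positive real multiple of $ie^{i\alpha}$. Therefore $V_n|_\g$ is a positive real multiple of $g\cdot i^n e^{in\alpha}$. The hypothesis that $V_n$ has no zeros on $\g$ forces the real function $g$ to have constant sign on $\g$, whence the phase of $V_n$ along $\g$ is that of $\pm i^n e^{in\alpha}$ and its rotation number equals $n$. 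Poincar\'e-Hopf then yields $n$ zeros of $V_n$ in the interior of $D$, counted with multiplicities.

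The main technical step is the identification $\partial_{\bar z}^n H|_\g = n!\,(\partial_{\bar z}\rho)^n g$; this is where the full strength of the vanishing hypothesis is used. Working with the global Cauchy-Riemann operator $\partial_{\bar z}$ and a real defining function $\rho$, rather than expanding everything in Fermi coordinates as in Theorem \ref{thm:two}, makes the winding computation for general $n$ a one-line consequence of the Leibniz rule.
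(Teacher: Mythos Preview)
Your argument is correct and reaches the same endpoint as the paper---namely, that along $\g$ the field $V_n$ is a nonvanishing real multiple of $i^n e^{in\alpha}$, whence its rotation number is $n$ and Poincar\'e--Hopf gives the conclusion---but the route is genuinely different. The paper works in Fermi coordinates, writes $\partial_x+i\partial_y=e^{i\alpha}\mathcal D$ with $\mathcal D=\tfrac{1}{1-sk}\partial_t+i\partial_s$, and then proves a separate combinatorial lemma to commute the phase past the iterated operator: from $[\mathcal D,e^{in\alpha}]=ne^{in\alpha}F$ one deduces $(e^{i\alpha}\mathcal D)^n=e^{in\alpha}\prod_{k=1}^{n}(\mathcal D+(n-k)F)$, after which the vanishing hypotheses kill all terms except $i^n\partial_s^nH(t,0)$. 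Your approach replaces this machinery by the single factorization $H-c=\rho^n g$ and the Leibniz rule for $\partial_{\bar z}^n$: since $\partial_{\bar z}^k(\rho^n)\in I^{\,n-k}$ vanishes on $\g$ for $k<n$, only the top term $n!(\partial_{\bar z}\rho)^n g$ survives, and the phase $i^n e^{in\alpha}$ drops out of $(\partial_{\bar z}\rho)^n$ immediately. This is cleaner (no induction lemma, no curvature-dependent coefficients) and makes transparent why the hypothesis is exactly ``$H-c$ vanishes to order $n$ on $\g$''. The paper's version, on the other hand, gives the boundary value of $V_n$ with its exact constant $i^n\partial_s^nH(t,0)$ rather than up to a positive multiple, and the commutator identity $(a\mathcal D)^n=a^n\prod(\mathcal D+(n-k)b)$ it isolates may be of independent interest.
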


\begin{proof}
We use the same coordinates $(t,s)$ as before, and we again calculate the vector field $V_n$ along $\g$. The chain rule implies that
$$
\partial_x+i\partial_y = e^{i\alpha(t)} \left(\frac{1}{1-sk(t)}\ \partial_t + i\partial_s\right)=:e^{i\alpha(t)} {\mathcal{D}}
$$
where we recall that $\alpha_t=k$. Moreover, we observe
\begin{equation}\nonumber
\begin{aligned}
[\mathcal{D},e^{in\alpha(t)}]f&=\mathcal{D}(e^{in\alpha(t)}f)-e^{in\alpha(t)} \mathcal{D}f\\
&=(\mathcal{D}e^{in\alpha(t)})f+e^{in\alpha(t)}\mathcal{D} f-e^{in\alpha(t)} \mathcal{D}f\\
&=ne^{in\alpha(t)}\frac{i\alpha_t(t)}{1-sk(t)}f,
\end{aligned}
\end{equation}
that is, the first-order differential operator $\mathcal{D}$ satisfies the commutation relation
$$
[{\mathcal{D}},e^{in\alpha(t)}]=ne^{in\alpha(t)} \frac{ik(t)}{1-sk(t)} =:ne^{in\alpha(t)} F(t,s),
$$
where the right hand side is the zero-order operator of multiplying by a function. Lemma \ref{lem:commutation}  below then implies that
$$
(\partial_x+i\partial_y)^n = e^{ni\alpha}({\mathcal{D}}+(n-1)F)({\mathcal{D}}+(n-2)F)\ldots ({\mathcal{D}}+F){\mathcal{D}}.
$$
Using the specific form of $\mathcal{D}$ one has
$$
({\mathcal{D}}+(n-1)F)({\mathcal{D}}+(n-2)F)\ldots ({\mathcal{D}}+F){\mathcal{D}} H(t,s)=\sum_{1\le j+k\le n} c_{jk}(t,s) \partial_t^j \partial_s^k H(t,s),
$$
where $c_{jk}(t,s)$ are some functions. Evaluating the right hand side on the boundary $\g$ and using the assumption that the first $n-1$ normal derivatives of $H$  vanish on $\g$, i.e.,~$\partial_s^k H(t,0)=0$ for $j=1,\ldots,n-1$, the only term that survives is $i^n (\partial_s^n H)(t,0)$. It follows that on the boundary one has
$$
V_n = i^n (\partial_s^n H)(t,0)\ e^{in\alpha(t)},
$$
and the proof concludes the same way as that of  Theorem \ref{thm:two}. 
\end{proof}

\begin{lemma}\label{lem:commutation}
Let $\mathcal{D}$ be a first order differential operator on real-valued functions in several variables. We assume that $\mathcal{D}$ satisfies the commutation relation $[\mathcal{D},a^n]=na^nb$, $n\in\mathbb{N}$, for some functions $a$ and $b$, where $a^n$ resp.~$na^nb$ are understood as multiplication operators. Then 
\begin{equation}\nonumber
(a\mathcal{D})^n=a^n\prod_{k=1}^n(\mathcal{D}+(n-k)b)
\end{equation}
holds for all $n\in\mathbb{N}$.
\end{lemma}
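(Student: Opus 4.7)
The proof is by induction on $n$. The base case $n=1$ is immediate since both sides equal $a\mathcal{D}$ (the product on the right is the single factor $\mathcal{D}+0\cdot b=\mathcal{D}$). For the inductive step, assume the formula holds for some $n\ge 1$, and compute
$$
(a\mathcal{D})^{n+1} = a\mathcal{D}\cdot(a\mathcal{D})^n = a\mathcal{D}\cdot a^n\prod_{k=1}^n(\mathcal{D}+(n-k)b).
$$

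The key move is to push $a^n$ to the left of $\mathcal{D}$ using the commutation hypothesis. From $[\mathcal{D},a^n]=na^nb$ one obtains
$$
\mathcal{D}\,a^n = a^n\mathcal{D}+[\mathcal{D},a^n] = a^n\mathcal{D}+na^nb = a^n(\mathcal{D}+nb),
$$
so $a\mathcal{D}\,a^n = a^{n+1}(\mathcal{D}+nb)$. Substituting,
$$
(a\mathcal{D})^{n+1} = a^{n+1}(\mathcal{D}+nb)\prod_{k=1}^n(\mathcal{D}+(n-k)b).
$$
A reindexing $k\mapsto k-1$ (with the new factor $(\mathcal{D}+nb)$ playing the role of $k=1$) converts the right hand side into $a^{n+1}\prod_{k=1}^{n+1}(\mathcal{D}+(n+1-k)b)$, which is the asserted formula at level $n+1$.

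There is essentially no obstacle, only a bookkeeping point: one must confirm that the product on the right is ordered so that higher-index factors sit to the left of lower-index ones, i.e., $\prod_{k=1}^{n}(\mathcal{D}+(n-k)b)=(\mathcal{D}+(n-1)b)(\mathcal{D}+(n-2)b)\cdots(\mathcal{D}+b)\mathcal{D}$. With this convention the newly produced factor $(\mathcal{D}+nb)$ naturally appears on the leftmost position, matching the $k=1$ term of the shifted product. One may also remark that the hypothesis $[\mathcal{D},a^n]=na^nb$ for all $n$ is in fact a consequence of its $n=1$ instance when $\mathcal{D}$ is a first order differential operator, via the Leibniz rule $\mathcal{D}(a^n)=na^{n-1}\mathcal{D}(a)$, but this observation is not needed for the induction to go through.
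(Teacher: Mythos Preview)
Your proof is correct and follows essentially the same induction argument as the paper: both use the commutation relation to rewrite $a\mathcal{D}\,a^n$ as $a^{n+1}(\mathcal{D}+nb)$ and then reindex the product. Your explicit remark on the ordering convention of the noncommutative product and the observation that the $n=1$ commutation relation already implies the general one via the Leibniz rule are nice clarifications not spelled out in the paper.
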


\begin{proof}
We proceed by induction. The claim is clearly true for $n=1$. For the induction step let $f$ be some function. To simplify the following computation we set $g:=\prod_{k=1}^n(\mathcal{D}+(n-k)b)f$. Then
\begin{equation}\nonumber
\begin{aligned}
(a\mathcal{D})^{n+1}f&=(a\mathcal{D})(a\mathcal{D})^nf\\
&=a\mathcal{D}\left(a^n\prod_{k=1}^n(\mathcal{D}+(n-k)b)f\right)\\
&=a\mathcal{D}(a^ng)\\
&=a\big(a^n\mathcal{D}g+na^nbg\big)\\
&=a^{n+1}(\mathcal{D}+nb)g\\
&=a^{n+1}(\mathcal{D}+nb)\prod_{k=1}^n(\mathcal{D}+(n-k)b)f\\
&=a^{n+1}\prod_{k=0}^n(\mathcal{D}+(n-k)b)f\\
&=a^{n+1}\prod_{k=1}^{n+1}(\mathcal{D}+(n+1-k)b)f\\
\end{aligned}
\end{equation}
completes the proof by induction. We used the induction assumption in the second equation and the commutation relation in the fourth equation.
\end{proof}

\section{Conformal points of moderate symplectomorphisms} \label{sec:moderate}

We now consider an extension of the second case of Theorem \ref{thm:two} to symplectomorphisms of a domain in $\R^2$ satisfying a certain condition  to be defined below.

We continue to assume that $D$ is as above, that is, a connected and simply connected domain $D$, 
bounded by a smooth closed curve $\gamma$. Consider a symplectomorphism $F:D\to D$, and let $G \subset D\times D \subset \C^2$ be its graph. We equip $\C^2$ with the symplectic form $\omega_1\ominus\omega_2$, where $\omega_1$ and $\omega_2$ are the pullbacks of the standard area form in $\C=\R^2$  to the first and the second summands. Let $\C\cong\Delta \subset \C^2$ be the diagonal, the graph of the identity map. Graphs of symplectomorphisms are Lagrangian submanifolds.

Let $L: (z_1,z_2)\mapsto (w_1,w_2)$ be the complex linear map $\C^2\to \C^2$ given by the formulas
$$
w_1=\frac{z_1+z_2}{2},\ w_2=i(z_1-z_2).
$$
We denote $z_j=x_j+iy_j, w_j=u_j+iv_j$, $j=1,2$. 

\begin{lemma} \label{lm:turn}
The map $L$ is a symplectomorphism between $(\C^2,\omega_1\ominus\omega_2)$ and $(T^*\Delta\cong\C^2, d\lambda)$ where $\lambda=u_2du_1+v_2dv_1$.
\end{lemma}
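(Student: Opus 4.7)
The strategy is a direct computation: express everything in real coordinates, pull back $\lambda$ (equivalently, compute $L^* d\lambda$), and check that the result matches $\omega_1\ominus\omega_2 = dx_1\wedge dy_1 - dx_2\wedge dy_2$. Since $L$ is a $\C$-linear map with matrix $\begin{pmatrix} 1/2 & 1/2\\ i & -i\end{pmatrix}$ of determinant $-i\neq 0$, it is a diffeomorphism, so the only content is the identification of symplectic forms.

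First I would unpack the formulas $w_1=(z_1+z_2)/2$, $w_2=i(z_1-z_2)$ in the real coordinates $z_j=x_j+iy_j$, $w_j=u_j+iv_j$. A short calculation gives
\begin{equation*}
u_1=\tfrac{x_1+x_2}{2},\quad v_1=\tfrac{y_1+y_2}{2},\quad u_2=y_2-y_1,\quad v_2=x_1-x_2,
\end{equation*}
from which one reads off $du_1=\tfrac12(dx_1+dx_2)$, $dv_1=\tfrac12(dy_1+dy_2)$, $du_2=dy_2-dy_1$, $dv_2=dx_1-dx_2$.

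Next I would expand $L^*(d\lambda)=du_2\wedge du_1+dv_2\wedge dv_1$ using these substitutions. The mixed terms $dx_i\wedge dy_j$ with $i\neq j$ appear in both $du_2\wedge du_1$ and $dv_2\wedge dv_1$ with opposite signs and therefore cancel; the surviving diagonal terms combine to give exactly $dx_1\wedge dy_1 - dx_2\wedge dy_2$. This is $\omega_1\ominus\omega_2$, so $L^*(d\lambda)=\omega_1\ominus\omega_2$ as claimed. The identification $\C\cong\Delta$ used to view $\lambda$ as a $1$-form on $T^*\Delta$ is the obvious one: the coordinates $(u_1,v_1)$ parametrize $\Delta=\{z_1=z_2\}$ (since $w_2=0$ iff $z_1=z_2$), while $(u_2,v_2)$ play the role of fiber coordinates, making $\lambda=u_2\,du_1+v_2\,dv_1$ the tautological Liouville form.

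There is no real obstacle here; the only thing that warrants care is the bookkeeping of signs in the cancellation of the off-diagonal wedge products, and the verification that the splitting $(w_1,w_2)\leftrightarrow(\text{base},\text{fiber})$ is consistent with the diagonal embedding $\Delta\hookrightarrow\C^2$. Both are immediate from the explicit formulas above.
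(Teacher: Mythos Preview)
Your proof is correct and follows exactly the same approach as the paper: the paper's proof consists of the single sentence that the equality $dx_1\wedge dy_1 - dx_2\wedge dy_2 = du_2\wedge du_1 + dv_2\wedge dv_1$ is verified by a direct calculation, which is precisely what you carry out in detail.
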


\begin{proof}
The required equality
$$
dx_1\wedge dy_1 - dx_2\wedge dy_2 = du_2\wedge du_1 + dv_2\wedge dv_1
$$
is verified by a direct calculation. 
%
%
%
\end{proof}


Consider a symplectomorphism $F$ whose graph $G\subset\C^2$ is transverse to the fibers of the cotangent bundle $T^*\Delta$, that is, $G$ needs to be transverse to the projection $\pr:\C^2\cong T^*\Delta\to\Delta$ which in the identification of Lemma \ref{lm:turn} simply is the map 
\begin{equation*} \label{eq:proj}
(z_1,z_2)\mapsto \pr(z_1,z_2)=\left(\frac{z_1+z_2}{2},\frac{z_1+z_2}{2}\right).
\end{equation*}
We call such symplectomorphisms {\it moderate}.

In particular, $F$ is moderate if it is $C^1$-close to the identity.  More precisely, one has the next result. Let the symplectomorphism of $D$ be given by $(x,y)\mapsto F(x,y)=(f(x,y),g(x,y))$, in particular, $f_xg_y-f_yg_x=1$. 

\begin{lemma} \label{lm:trans}
The graph $G$ of the symplectomorphism $F$ is transverse to the fibers of $T^*\Delta$ near a point $(z,F(z))\in G$ if and only if $2+ f_x(z) + g_y(z)\ne 0$ with $z=x+iy$.
In particular, $F$ is moderate if and only if $2+ f_x + g_y\ne 0$ for all $(x,y)\in D$.
\end{lemma}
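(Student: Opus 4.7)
The plan is to observe that $G$ and the fibers of $\pr:\C^2\cong T^*\Delta\to\Delta$ are each two-(real-)dimensional in the four-dimensional ambient space, so transversality of $G$ to the fiber through $p=(z,F(z))$ is equivalent to $\pr|_G$ being a local diffeomorphism at $p$. Parameterizing $G$ by $\iota:D\to G$, $\iota(z)=(z,F(z))$, this reduces the problem to checking invertibility of $d(\pr\circ\iota)_z$.

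Next I would write down the composition explicitly. Using the projection formula from the excerpt and identifying $\Delta$ with $\C$ via the first coordinate, one has
$$
\pr\circ\iota: (x,y)\longmapsto \left(\frac{x+f(x,y)}{2},\ \frac{y+g(x,y)}{2}\right),
$$
whose Jacobian matrix is $\tfrac{1}{2}\begin{pmatrix} 1+f_x & f_y \\ g_x & 1+g_y \end{pmatrix}$. Expanding and invoking the area-preserving condition $f_xg_y-f_yg_x=1$ gives
$$
(1+f_x)(1+g_y)-f_yg_x \;=\; 1+f_x+g_y+(f_xg_y-f_yg_x) \;=\; 2+f_x+g_y,
$$
so $\det d(\pr\circ\iota)_z=\tfrac{1}{4}(2+f_x+g_y)$, which is nonzero precisely when $2+f_x+g_y\neq 0$. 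This establishes the pointwise transversality criterion, and since moderation is defined as transversality at every point of $G$, it also gives the global characterization.

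For the \emph{in particular} clause, $C^1$-closeness of $F$ to the identity forces $f_x$ and $g_y$ to be close to $1$, so $2+f_x+g_y$ is close to $4$ and hence nonzero throughout $D$. I do not anticipate any serious obstacle: the entire argument is essentially linear algebra, and the only noteworthy step is recognizing that the symplectic constraint cancels the off-diagonal cross-term $f_xg_y-f_yg_x$ and reduces the determinant to an expression involving only the diagonal entries of $dF$.
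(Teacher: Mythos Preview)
Your argument is correct and essentially the same as the paper's: both reduce transversality to the nonvanishing of $(1+f_x)(1+g_y)-f_yg_x=2+f_x+g_y$, with the only cosmetic difference that the paper packages this as a $4\times 4$ determinant of the four spanning vectors of $T_pG$ and $\ker d\pr_p$, whereas you compute the $2\times 2$ Jacobian of $\pr\circ\iota$ directly.
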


\begin{remark}
{\rm
In other words the graph $G$ a symplectomorphism $F$ is transverse to the fibers of $T^*\Delta$ if its Jacobian in $\mathrm{SL(2,\R)}$ is never negative parabolic. It easy to see that, e.g.,~the graph of a shear $(x,y)\mapsto (-x\pm y,-y)$ indeed is not transverse to the fibers.
}
\end{remark}

\begin{proof}
One has $G=\{(x,y,f,g)\mid (x,y)\in D\}$, and its tangent space is spanned by the vectors $(1,0,f_x,g_x)$ and $(0,1,f_y,g_y)$. The kernel of the projection on the diagonal is spanned by the vectors $(1,0,-1,0)$ and $(0,1,0,-1)$. These two spaces are not transverse if and only if
$$
\det \begin{pmatrix}
	1&0&f_x&g_x\\
	0&1&f_y&g_y\\
	1&0&-1&0\\
	0&1&0&-1
\end{pmatrix} = 0.
$$
This determinant equals $(1+f_x)(1+g_y)-f_yg_x = 2+f_x+g_y$,  and the result follows. 
\end{proof}


Let $\bar D\subset \Delta$ be the diagonal embedding of $D$, i.e., the graph of $\mathrm{id}_D$, and denote by $\bar \g$ its boundary.

\begin{lemma}\label{lm:moderate_and_domain}
Assume that $F$ is moderate and is the identity on the boundary $\g$. Then $\pr:G\to \bar D$ is a diffeomorphism. 
\end{lemma}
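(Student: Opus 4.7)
The plan is to pass from the graph $G$ to a self-map of $\bar D$ and apply a Brouwer-degree count. Parametrizing $G$ by $D$ via $z\mapsto(z,F(z))$ and identifying $\Delta\cong\C$ via $(w,w)\mapsto w$, the projection $\pr|_G$ becomes the map
\[
\phi:\bar D\to\C,\qquad \phi(z)=\frac{z+F(z)}{2}.
\]
It thus suffices to prove that $\phi$ is a diffeomorphism of $\bar D$ onto itself.

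First I would note that $\det d\phi$ is precisely $(2+f_x+g_y)/4$, so Lemma \ref{lm:trans} makes ``moderate'' equivalent to $\phi$ being a local diffeomorphism everywhere on $\bar D$. Since $F|_\g=\mathrm{id}$ forces $f_x=g_y=1$ on $\g$, $\det d\phi$ equals $1$ there; by connectedness of $D$ and the nonvanishing of $\det d\phi$ it stays positive throughout, so $\phi$ is orientation-preserving. The same boundary hypothesis yields $\phi|_\g=\mathrm{id}_\g$, so in particular $\phi(\g)=\g$.

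The remainder is a standard Brouwer-degree argument. Because $\phi|_\g$ is the identity, for any $p\in\R^2\setminus\g$ the degree $\deg_p(\phi)$ equals the winding number of the boundary loop $\mathrm{id}_\g$ around $p$, which is $1$ for $p\in\mathrm{int}\,D$ and $0$ for $p\notin\bar D$. On the other hand, $\phi$ is an orientation-preserving local diffeomorphism of the compact manifold-with-boundary $\bar D$, so the degree counts preimages. Therefore every point of $\mathrm{int}\,D$ has exactly one preimage under $\phi$ and every point outside $\bar D$ has none.

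The one point that needs a little care is ruling out that some $z_0\in\mathrm{int}\,D$ could have $\phi(z_0)\in\g$: any local-diffeomorphism neighborhood of such a $z_0$ would map onto a neighborhood of $\phi(z_0)$ meeting $\R^2\setminus\bar D$, where we have just shown that preimages do not exist, a contradiction. Combined with $\phi|_\g=\mathrm{id}_\g$, this shows $\phi:\bar D\to\bar D$ is a smooth bijection and a local diffeomorphism, hence a diffeomorphism. The main substance of the lemma is already encoded in Lemma \ref{lm:trans}; the only real obstacle is the bookkeeping in the degree argument, especially the exclusion of interior-to-boundary hits just described.
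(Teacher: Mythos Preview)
Your argument is essentially correct, with one overstatement: from $F|_\gamma=\mathrm{id}$ you only get that $\gamma'$ is an eigenvector of $dF$ with eigenvalue $1$; combined with $\det dF=1$ this gives $f_x+g_y=\mathrm{tr}\,dF=2$ on $\gamma$, not $f_x=g_y=1$ separately. That weaker statement is all you need for $2+f_x+g_y=4>0$ on $\gamma$, so the positivity of $\det d\phi$ and the rest of your degree argument go through unchanged.

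Your route is genuinely different from the paper's. The paper breaks the problem into three pieces: surjectivity $\bar D\subset\pr(G)$ via a no-retraction argument that uses only continuity and $\phi|_{\gamma}=\mathrm{id}$; the containment $\pr(G)\subset\bar D$ via a maximal-distance argument (this is where moderateness enters); and finally injectivity by observing that a local diffeomorphism onto a simply connected target is a covering, hence a bijection. Your Brouwer-degree approach collapses the first and third steps into one stroke and replaces the maximal-distance argument by the open-mapping observation that an interior point cannot land on $\gamma$. Your version is more economical, but it leans explicitly on the orientation-preserving property of $\phi$; the paper's decomposition has the virtue of isolating precisely which hypotheses are used where (as they note, neither the symplectic condition nor simple connectedness is needed for $\pr(G)\subset\bar D$).
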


\begin{proof} 
That $\pr$ is a mop onto $\bar D$ is a fairly general topological argument and does not use that $G$ is transverse to the fibers. 

Indeed, assume that $B\subset \R^n$ is smooth $n$-dimensional manifold with boundary homeomorphic to the closed unit ball and $\Phi:B\to\R^n$ is a continuous map with $\Phi|_{\partial B}=\mathrm{id}$. We claim that $B\subset\Phi(B)$ holds. 

Assume for a contradiction that we find a point $p\in B\setminus \Phi(B)$. Then we may consider the map $\varphi:\partial B\hookrightarrow B\stackrel{\Phi}{\to}\R^n\setminus\{p\}\stackrel{r}{\simeq}\partial B$ where the map $r:\R^n\setminus\{p\}\to\partial B$ is a retraction. Here we use that $p\in B$, in fact, $p\in \mathring{B}$ since $\Phi|_{\partial B}=\mathrm{id}$. Moreover, $\Phi|_\g$ is not null-homotopic again since $\Phi|_{\partial B}=\mathrm{id}$. On the other hand $\Phi$ factors through the contractible set $B$, i.e.,~is null-homotopic, and thus so is $\Phi|_\g$. This contradiction shows that $B\subset \Phi(B)$. 

Applying this to $\Phi:B:=\bar D\to\C$ with $\Phi(z,z):=\pr(z,F(z))$ we conclude $\bar D\subset \pr(G)=\Phi(\bar D)$.

Now assume that $F$ is in addition transverse to $\pr$. Then $\pr$ is a local diffeomorphism. 
Let us prove that $\pr(G)\subset \bar D$. This would imply that $\pr$ is a covering, but since $\bar D$ is simply connected, it must be a diffeomorphism.

Choose a point $(w,w)\in\pr(G)\setminus \bar D\subset \Delta$ having maximal distance to some fixed point $o\in\bar D$. Choose $(z,F(z))\in G$ with $\pr(z,F(z))=(w,w)$. Since $\pr$ is a local diffeomorphism the point $(z,F(z))$ needs to be on the boundary of $G$: otherwise we could move its projection $(w,w)=\pr(z,F(z))$ further away from $o$. Since $G$ is diffeomorphic to $D$ and $F$ is the identity on $\partial  D$ we actually conclude $(z,F(z))=(z,z)=(w,w)\in\partial\bar D$. 

In conclusion, the points of $\pr(G)$ of maximal distance to $o$ are on the boundary of $\bar D$, in particular, $\pr(G)\subset \bar D$, as claimed.
\end{proof}


\begin{remark}
{\rm
We point out that in the previous proof of $\pr(G)\subset \bar D$ we did neither use that $F$ is a symplectomorphism nor that $D$ is simply connected. Moreover, this statement has the following geometric interpretation. The mid-point map $D\ni z\mapsto \tfrac12(z+F(z))\in\C$ takes values inside $D$ if $F$ is moderate and the identity on the boundary. For non-convex domains $D$ this is geometrically not directly clear to us.
} 
\end{remark}

We are ready to extend Theorem \ref{thm:two} to moderate symplectomorphisms.

\begin{theorem} \label{thm:mod}
Let $F$ be a moderate symplectomorphism of $D$ which is the identity on the boundary $\g$. Assume that $F$ has no conformal points on $\g$. Then $F$ possesses at least two conformal points inside $D$, counted with multiplicity.
\end{theorem}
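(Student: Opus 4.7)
My plan is to reduce Theorem~\ref{thm:mod} to the second case of Theorem~\ref{thm:two} by introducing a generating function for $F$. By Lemma~\ref{lm:moderate_and_domain}, the projection $\pr: G \to \bar D$ is a diffeomorphism, so the graph $G$ is a Lagrangian section of $T^*\Delta$ over the simply connected disc $\bar D$; it is therefore the graph of an exact one-form $dh$ for some smooth $h: \bar D \to \R$. Unravelling the symplectomorphism $L$ of Lemma~\ref{lm:turn} with $(u, v) := (u_1, v_1)$, this amounts to the mid-point equations
\begin{equation*}
(x_1, y_1) = \bigl(u + \tfrac12 h_v,\ v - \tfrac12 h_u\bigr), \qquad (x_2, y_2) = \bigl(u - \tfrac12 h_v,\ v + \tfrac12 h_u\bigr),
\end{equation*}
which determine $F$ implicitly from $h$.

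The key computation is to show that $dF$ is a Cayley transform of $\mathit{Hess}(h)$. Differentiating the two systems gives $d(x_j, y_j) = M_\pm\, d(u,v)$ with $M_\pm = I \pm \tfrac12 K$ and
\begin{equation*}
K = \begin{pmatrix} h_{uv} & h_{vv} \\ -h_{uu} & -h_{uv} \end{pmatrix},
\end{equation*}
so that $dF = M_- M_+^{-1} = (I - K/2)(I + K/2)^{-1}$. Because $I \pm K/2$ are polynomials in $K$, this Cayley transform and its inverse preserve the property of commuting with the rotation $J$; hence $dF$ is a similarity iff $K$ commutes with $J$, which is exactly the system $h_{uv} = 0$, $h_{uu} = h_{vv}$ of~(\ref{eq:mainv}) applied to $h$. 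Via the mid-point diffeomorphism $\Phi: z \mapsto \tfrac12(z + F(z))$ from $D$ onto $\bar D$, conformal points of $F$ thus correspond bijectively, with multiplicity, to umbilics of $\mathit{Hess}(h)$ on $\bar D$.

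Next I would translate the boundary data. Since $F = \mathrm{id}$ on $\g$, the mid-point equations force $h_u = h_v = 0$ along $\partial \bar D = \g$, so $h$ is constant on $\partial \bar D$ with vanishing normal derivative, placing $h$ in the setting of Theorem~\ref{thm:two}(ii). In boundary-adapted orthonormal coordinates $(t,s)$, $h_{tt}(t,0) = h_{ts}(t,0) = 0$, so $\mathit{Hess}(h)$ on $\g$ is $\mathrm{diag}(0, h_{ss}(t,0))$. This matrix is scalar iff $h_{ss}(t,0) = 0$ iff $K = 0$ iff $dF_{|p} = I$ at the corresponding boundary point $p \in \g$, i.e.\ iff $p$ is a conformal point of $F$. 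The hypothesis "no conformal points of $F$ on $\g$" is therefore equivalent to the vector field from~(\ref{eq:V}) for $h$ having no zeros on $\partial \bar D$, and Theorem~\ref{thm:two}(ii) applied to $h$ on $\bar D$ produces at least two zeros in the interior, counted with multiplicity; $\Phi^{-1}$ then transports them to the sought conformal points of $F$.

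The main obstacle I anticipate is the Cayley-transform step: establishing cleanly that the generating-function representation converts conformal points of $F$ into umbilics of $h$, \emph{and} that local indices (hence multiplicities) agree under this correspondence. A secondary subtlety is that the mid-point chart and the generating function $h$ must extend smoothly up to $\g$; this should follow from moderation together with $F = \mathrm{id}$ on $\g$, which at boundary points makes $dF$ conjugate to an upper-triangular unipotent matrix and hence comfortably away from the forbidden negative-parabolic stratum of Lemma~\ref{lm:trans}.
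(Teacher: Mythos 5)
Your proposal is correct in substance and shares the paper's core strategy --- represent the graph $G$ as the section $dH$ of $T^*\Delta$ via Lemmas \ref{lm:turn} and \ref{lm:moderate_and_domain}, use $F=\mathrm{id}$ on $\g$ to get $dH=0$ on $\bar\g$, and reduce to case ii) of Theorem \ref{thm:two} --- but the key computation is organized differently. The paper never establishes your global ``conformal points of $F$ $=$ scalar-Hessian points of $H$'' dictionary: it defines the vector field $V=(f_x-g_y)\,\partial_u+(f_y+g_x)\,\partial_v$ directly from the derivatives of $F$ (so its zeros \emph{are} the conformal points by definition, multiplicities included), computes $V$ only \emph{on the boundary} using the relation $H_{uu}H_{vv}=H_{uv}^2$ forced there by $dH=0$, finds $V=(H_{uu}-H_{vv})\,\partial_u+2H_{uv}\,\partial_v$ along $\bar\g$, and concludes by the rotation-number argument. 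Your Cayley-transform identity $dF=(I-K/2)(I+K/2)^{-1}$ is a nice global repackaging of the paper's differential computation, and your commutation-with-$J$ argument correctly identifies the conformal locus with the umbilic locus of $H$ throughout $\bar D$; what it buys is a cleaner conceptual picture, at the cost of the multiplicity-matching issue you flag, which the paper's formulation sidesteps entirely.

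That remaining issue does close, and it is worth recording how: the paper's explicit formulas give
\begin{equation*}
\bigl(f_x-g_y,\ f_y+g_x\bigr)=\frac{4}{4+\det K}\,\bigl(-2H_{uv},\ H_{uu}-H_{vv}\bigr),
\end{equation*}
where $4+\det K=4\det(I+K/2)$ is nonvanishing by moderateness and equals $4$ on $\bar\g$ (since $\det K=H_{uu}H_{vv}-H_{uv}^2=0$ there), hence is positive on all of the connected set $\bar D$. So the ``conformality defect'' of $F$ and the Hessian vector field of $H$ differ by a positive scalar function and a fixed rotation, and therefore have identical zero sets \emph{and} identical local indices; transporting by the orientation-preserving diffeomorphism $\Phi^{-1}$ preserves these indices. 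With that sentence added, your argument is complete and equivalent to the paper's.
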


\begin{proof}
A point $(x,y)$ is conformal if and only if
$$
f_x(x,y)=g_y(x,y),\ f_y(x,y)=-g_x(x,y).
$$
We claim that, counted with multiplicity, $f_y+g_x$ and $g_y-f_x $ vanish simultaneously at at least two points. We shall reduce this to case ii) in Theorem \ref{thm:two}.

In the notation of Lemma \ref{lm:turn}, $x=x_1, y=y_1$ and, accordingly, we write $u$ for $u_1$ and $v$ for $v_1$. Thus $(u,v)$ are coordinates in $\bar D$, and Lemma \ref{lm:moderate_and_domain} implies that $(x,y)\mapsto (u,v)$ is a diffeomorphism. Therefore we may consider 
$f_x(x,y)-g_y(x,y)$ and $f_y(x,y)+g_x(x,y)$ as functions of $u$ and $v$. We ``pack" these two functions in a vector filed
$$
V=(f_x(x,y)-g_y(x,y))\ \partial u + (f_y(x,y)+g_x(x,y))\ \partial v.
$$
Our goal is to show that $V$ has two zeros inside $\bar D$. For this purpose, we calculate $V$ on the boundary $\bar \g$.

The graph $G$ of $F$ is Lagrangian and, by assumption, transverse to the fibers of $T^*\Delta$. Thus the graph is a section of the cotangent bundle given by  the differential of a function $H:\bar D\to \R$. Since $F=(f,g)$ is the identity on $\g$, one has $dH=0$ on $\bar\g$. 

Using the coordinate change from Lemma \ref{lm:turn} we obtain the following formulas involving the graph $G=\{(x,y,f,g)\mid (x,y)\in D\}$ and $dH$ 
\begin{equation}\nonumber
\begin{aligned}
2x&=2u+H_v,\quad&2f&=2u-H_v,\\
2y&=2v-H_u,\quad &2g&=2v+H_u.
\end{aligned}
\end{equation}
%
Taking differentials, it follows that
\begin{equation}\nonumber
\begin{aligned}
2dx&=2du + H_{uv} du + H_{vv} dv,\quad &2f_x dx + 2f_y dy &= 2du - H_{uv} du - H_{vv} dv,\\
2dy&=2dv - H_{uu} du - H_{uv} dv,\quad  &2g_x dx + 2g_y dy&=2dv + H_{uu} du + H_{uv} dv.
\end{aligned}
\end{equation}
Substitute the two left equations into the right ones and equate the resulting 1-forms to obtain 
\begin{equation}\nonumber
\begin{aligned}
f_x \big(4- H_{uv}^2+H_{uu}H_{vv}\big)&= 4+H_{uv}^2-H_{uu}H_{vv}-4H_{uv},\\
f_y \big(4- H_{uv}^2+H_{uu}H_{vv}\big)&= -4H_{vv},\\
g_x \big(4- H_{uv}^2+H_{uu}H_{vv}\big)&= 4H_{uu},\\
g_y \big(4- H_{uv}^2+H_{uu}H_{vv}\big)&=4+H_{uv}^2-H_{uu}H_{vv}+4H_{uv}.
\end{aligned}
\end{equation}
Denote by $t$ the variable along $\bar\g$. Then deriving $H_u=0$ resp.~$H_v=0$ along $\bar\g$ we obtain from the chain rule
\begin{equation}\nonumber
\begin{aligned}
H_{uu}u_t+H_{vu}v_t&=0,\ 
H_{uv}u_t+H_{vv}v_t&=0.
\end{aligned}
\end{equation}
It follows that $H_{uu}H_{vv}=H_{uv}^2$, and we obtain
\begin{equation}\nonumber
\begin{aligned}
f_y+g_x = H_{uu}-H_{vv},\quad g_y-f_x = 2H_{uv}
\end{aligned}
\end{equation}
along $\bar\g$. 
Therefore, along $\bar\g$, one has
$$
V=(H_{uu}-H_{vv})\ \partial u + 2H_{uv}\ \partial v,
$$ 
which takes us to the second case of Theorem \ref{thm:two}. Its proof shows exactly the desired assertion.
\end{proof}

\section{Carath\'eodory conjecture and complex points of Lagrangian surfaces} \label{sec:compl}

\subsection{Geometry of the space of oriented lines} \label{subsec:lines}

In this section we survey the geometry of the space of oriented lines in $\R^3$ and the space of oriented non-parameterized geodesics in $S^3$ and $H^3$. We refer to \cite{AGK,GK1,GK2,GK3} and Section 5.6 of \cite{OT} for the material of this and the subsequent sections.

To start with, the space ${\mathcal G}(M)$ of oriented non-parameterized geodesics of a Riemannian manifold $M$ carries a symplectic structure that is obtained from the canonical symplectic structure of $T^*M$ by symplectic reduction.  This is true locally, and if the space of geodesics is a smooth manifold, then this manifold is symplectic. 

If $N\subset M$ is a cooriented hypersurface, then the normal geodesics to $N$ provide a Lagrangian immersion of $N$ into ${\mathcal G}(M)$. Denote the image of this immersion by ${\mathcal L}(N)$.

In the Euclidean case, one has ${\mathcal G}(\R^n)=T^*S^{n-1}$, in particular, ${\mathcal G}(\R^3)=T^*S^{2}$. For hyperbolic 3-space, the space of geodesics is also symplectomorphic to $T^*S^{2}$. If $M=S^3$, then the geodesics are the great circles, and ${\mathcal G}(S^3)= G_+(2,4)$, the Grassmannian of oriented 2-dimensional subspaces in $R^4$. One has: $G_+(2,4)=S^2\times S^2$, see, e.g.,  \cite{GW}.

Given a line $\ell \subset \R^3$, its train $S(\ell) \subset {\mathcal G}(\R^3)$ consists of the lines that intersect $\ell$. The train is a singular hypersurface. 
In the tangent space $T_\ell {\mathcal G}(\R^3)$, one obtains a quadratic cone. A similar construction works for  $H^3$ and $S^3$.

Next, ${\mathcal G}(\R^3)$ carries a complex structure. Let $\ell$ be an oriented line in $\R^3$. The $90^\circ$ rotation of the ambient space about $\ell$ induces an automorphism of the tangent space $T_\ell {\mathcal G}(\R^3)$; one obtains an almost complex structure, and this structure is integrable. Similarly for $H^3$ and $S^3$. 

Note that this complex structure is not compatible with the symplectic structure. For example, the lines through a fixed point form a Lagrangian sphere which is also a complex curve.

Furthermore, ${\mathcal G}(\R^3)$ carries a K\"ahler structure whose metric has signature $(+,+,-,-)$. Given a surface $N\subset \R^3$, the restriction of this metric to ${\mathcal L}(N)$ is either Lorentz or null. The latter happens at the complex points of ${\mathcal L}(N)$. 

The intersection of the quadratic cone in $T {\mathcal G}(\R^3)$ with  $T {\mathcal L}(N)$ is the light cone of the restriction of the K\"ahler metric to 
${\mathcal L}(N)$. The two light directions on ${\mathcal L}(N)$ correspond to the two principal directions on the surface $N$.  
 Likewise for $H^3$ and $S^3$. 

\subsection{Umbilics as conformal points} \label{subsec:umb}


The differential equation of the principal directions is given by
$$
\beta_{xy} dx^2 + (\beta_{yy}-\beta_{xx}) dxdy - \beta_{xy}dy^2=0,
$$
where $\beta(x,y)$ is  the Bonnet function, defined in a special Bonnet chart on the surface, see \cite{Da}. 
Representing the principal directions as the eigendirections of a quadratic form is similar to our alternative proof of Theorem \ref{thm:two} using the Hessian matrix $\mathit{Hess}(H)$.


Let $H:S^2 \to \R$ be a smooth function. Consider the respective Hamiltonian vector field on the sphere. 

\begin{lemma} \label{lm:confpt}
A point $s\in S^2$ is conformal for the Hamiltonian vector field of $H$  if and only if the second jet $j^2 H(s)$ equals the second jet of a first spherical harmonic, that is, the restriction to $S^2$ of an affine function $d+ax+by+cz$ in $\R^3$. 
\end{lemma}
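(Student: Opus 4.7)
The plan is to realize both sides of the claimed equivalence as $4$-dimensional linear subspaces of the $6$-dimensional jet space $J^2_s C^\infty(S^2)$ and to show that one is contained in the other, forcing equality.

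I would first check that being a conformal point of $X_H$ is a condition on $j^2 H(s)$ alone. The linearization $\nabla X_H(s) \colon T_s S^2 \to T_s S^2$ depends linearly on $j^2 H(s)$, and as in the proof of Theorem \ref{thm:twoR} the requirement that it be an infinitesimal similarity reduces, in an isothermal chart centered at $s$, to two linear equations on the components of $j^2 H(s)$. These two equations are linearly independent (one may prescribe $H_{xx}(s), H_{xy}(s), H_{yy}(s)$ freely), so the locus $W \subset J^2_s C^\infty(S^2)$ of conformal $2$-jets is a linear subspace of codimension $2$, hence dimension $4$.

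Next I would exhibit a $4$-dimensional subspace $W_0 \subseteq W$ consisting of $2$-jets of first spherical harmonics at $s$. The Hamiltonian vector field on $S^2$ of $H_0 = d + aX + bY + cZ$ is the Killing field generating rotation about the axis $(a,b,c)$; its flow consists of isometries, so its derivative at every point of $S^2$ is an orthogonal automorphism of the tangent plane, in particular a similarity. Hence every point of $S^2$ is conformal for $X_{H_0}$, and $j^2_s H_0 \in W$ for all $(a,b,c,d)$.

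Finally I would verify that the linear map $(a,b,c,d)\mapsto j^2_s(d+aX+bY+cZ)$ is injective, so that $W_0$ really has dimension $4$. Using the $SO(3)$-symmetry I would reduce to $s$ being the south pole and use stereographic coordinates $(x,y)$, where a short Taylor expansion gives
$$j^2_s(d+aX+bY+cZ) = (d-c) + 2ax + 2by + 2c(x^2+y^2),$$
whose vanishing forces $a=b=c=d=0$. Since $W_0 \subseteq W$ and both have dimension $4$, one concludes $W_0 = W$, which is exactly the lemma. The main obstacle is the independence claim in the first step: in the Riemannian setting, the map $j^2 H(s)\mapsto \nabla X_H(s)$ couples the Hessian of $H$ with the first derivatives of the conformal factor of the metric, so one must confirm the two conformality equations do not collapse to one. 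I expect this to reduce to the observation that already the restriction of this map to the pure-Hessian part of $j^2 H(s)$ surjects onto trace-free endomorphisms of $T_s S^2$.
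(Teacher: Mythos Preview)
Your argument is correct, and it is organized differently from the paper's. The paper disposes of the lemma in two sentences: a conformal point of an area preserving map is a point where the derivative is a similarity of determinant $1$, i.e., an isometry; at the infinitesimal level this says that the $1$-jet of $X_H$ at $s$ agrees with that of a Killing field on $S^2$, and the Killing fields on $S^2$ are exactly the Hamiltonian vector fields of the restrictions of affine functions $d+ax+by+cz$. From this the $2$-jet statement for $H$ follows. Your route instead casts both the ``conformal at $s$'' condition and the ``$2$-jet of a first harmonic'' condition as linear subspaces $W$ and $W_0$ of the $6$-dimensional space $J^2_sC^\infty(S^2)$, checks $W_0\subseteq W$ via the same Killing-field observation, and then forces $W_0=W$ by showing both have dimension $4$.

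The paper's argument is shorter and explains \emph{why} first harmonics arise: they are precisely the Hamiltonians of the ambient rotations. Your dimension count, on the other hand, makes the converse direction completely explicit by verifying that ``conformal at $s$'' really is a pair of \emph{independent} linear conditions on $j^2H(s)$, something the paper's proof leaves implicit. Your concern about the Riemannian cross-terms is legitimate but resolves exactly as you anticipate: in an isothermal chart the two conformality equations read $g(H_{xx}-H_{yy})+g_xH_x-g_yH_y=0$ and $2gH_{xy}+g_xH_y+g_yH_x=0$, and the pure-Hessian part, with nonvanishing coefficients $g$ and $2g$, already surjects onto the symmetric trace-free endomorphisms of $T_sS^2$, giving codimension $2$. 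Your stereographic injectivity computation is likewise correct.
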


\begin{proof}
A point is conformal for an area preserving diffeomorphism if its derivative sends circles to congruent circles, that is, it is an isometry. An orientation preserving isometry of $S^2$ is a rotation about an axis, and its Hamiltonian is the restriction to $S^2$ of an affine function $d+ax+by+cz$ (the axis being parallel to the vector $(a,b,c)$).  
\end{proof}

Recall that the support function of a convex surface $N\subset \R^3$ is a function $H:S^2 \to \R$ that equals the (signed) distance $H(s)$ from the origin to the tangent plane of $N$ whose oriented normal is $s\in S^2$. The differential $dH$ defines a Lagrangian section of $T^* S^2$; using its identification with ${\mathcal G}(\R^3)$, we obtain the Lagrangian surface ${\mathcal L}(N)$.

\begin{lemma} \label{lm:umb}
The umbilic points of $N$ correspond to points $s\in S^2$ where  $j^2 H(s)$ is the second jet of a first spherical harmonic. 
\end{lemma}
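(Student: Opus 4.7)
The plan is to express the umbilic condition on $N$ as a condition on the intrinsic Hessian of the support function $H$ on the round sphere $(S^2, g)$, and then to recognize this as exactly the condition that $j^2H(s)$ equals the second jet of a first spherical harmonic; the latter identification is almost immediate once one computes the Hessian of a first spherical harmonic explicitly.

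The first step will invoke the classical support-function formula. For a strictly convex $N$, the inverse Gauss map is $s\mapsto r(s)=H(s)\,s+\nabla_{S^2}H(s)\in N\subset \R^3$, and a short computation using $d_v s = v$ for $v\in T_sS^2\subset\R^3$ gives
$$
dr(v)=\bigl(\mathit{Hess}_{S^2}(H)(s)+H(s)\cdot\mathrm{Id}\bigr)v.
$$
Thus the inverse of the Weingarten operator of $N$ at $r(s)$ is the self-adjoint operator $A(s):=\mathit{Hess}_{S^2}(H)(s)+H(s)\cdot\mathrm{Id}$ on $T_sS^2$, and its eigenvalues are the principal radii of curvature at $r(s)$. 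Consequently, $s$ corresponds to an umbilic point of $N$ if and only if $A(s)$ is a scalar multiple of $\mathrm{Id}$, equivalently if and only if $\mathit{Hess}_{S^2}(H)(s)$ is a scalar multiple of $g_s$.

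The second step is a direct computation of the intrinsic Hessian on $S^2$ of a first spherical harmonic $\phi=d+ax+by+cz|_{S^2}$: in spherical coordinates, or from the general formula relating intrinsic and ambient Hessians of a restricted function via the second fundamental form of $S^2\subset\R^3$, one obtains
$$
\mathit{Hess}_{S^2}(\phi)=-(\phi-d)\cdot g,
$$
which at every point is already a scalar multiple of $g$. Hence if $j^2H(s)=j^2\phi(s)$ for some first spherical harmonic $\phi$, then $\mathit{Hess}_{S^2}(H)(s)$ is scalar and $s$ is umbilic by Step 1.

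For the converse, suppose $\mathit{Hess}_{S^2}(H)(s)=\mu\, g_s$. Writing $v=(a,b,c)\in\R^3$, one has $\phi(s)=d+\langle s,v\rangle$, $\nabla_{S^2}\phi(s)=v-\langle s,v\rangle s$, and $\mathit{Hess}_{S^2}(\phi)(s)=-\langle s,v\rangle\,g_s$. The choice $v=\nabla_{S^2}H(s)-\mu\, s$ and $d=H(s)+\mu$ then makes the value, differential and Hessian of $\phi$ agree with those of $H$ at $s$, so $j^2\phi(s)=j^2 H(s)$. The only real obstacle is Step 1, the support-function formula for the principal radii; the remaining steps are direct linear-algebraic and coordinate computations.
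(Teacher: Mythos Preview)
Your argument is correct. The support-function formula $dr(v)=(\mathit{Hess}_{S^2}(H)+H\cdot\mathrm{Id})v$ is the classical one, and once it is in hand the rest is straightforward linear algebra plus the explicit computation of $\mathit{Hess}_{S^2}$ on linear functions. Your construction of the matching first harmonic in the converse direction is also correct.

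The paper takes a shorter, more geometric route. It simply observes that a point of $N$ is umbilic precisely when some round sphere $S$ is second-order tangent to $N$ there; second-order tangency of two convex surfaces is equivalent to equality of the $2$-jets of their support functions at the common normal direction; and the support function of a sphere of radius $d$ centered at $(a,b,c)$ is exactly $d+ax+by+cz$ restricted to $S^2$. So the whole lemma is dispatched in two sentences, at the cost of invoking the (standard but unproved in the paper) fact that contact order of convex hypersurfaces matches jet order of their support functions. Your approach unpacks this correspondence explicitly via the Weingarten formula, making the proof self-contained and quantitative --- in particular you see directly that the umbilic condition is ``$\mathit{Hess}_{S^2}(H)$ is a scalar multiple of $g$'', which is useful in its own right. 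The paper's version is cleaner to state; yours is more transparent about why it works.
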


\begin{proof}
The umbilic points are the points where a sphere is 2nd order tangent to $N$, that is, where $j^2 H$ equals the second jet of the support function of a sphere. But the support functions of the spheres are the first harmonics (with $d>0$ being the radius of the sphere).
\end{proof}

Lemmas \ref{lm:confpt} and \ref{lm:umb}  imply that the umbilic points of a convex surface $N$ are the conformal points of the Hamiltonian vector field of the support function of $N$.
We are led to the following generalization of the Carath\'eodory conjecture.

\begin{conjecture} \label{con:4}
{\it An area preserving diffeomorphism of $S^2$ possesses at least two distinct conformal points.} 
\end{conjecture}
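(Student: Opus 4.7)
The plan is to adapt the alternative line-field argument from the second case of the proof of Theorem~\ref{thm:two} to the global setting on $S^2$. For a smooth orientation- and area-preserving diffeomorphism $F:S^2\to S^2$, at each point $p\in S^2$ define the self-adjoint positive operator $S_p:=(dF_p)^*dF_p$ on $T_pS^2$, where the adjoint uses the round metric on both $T_pS^2$ and $T_{F(p)}S^2$. Since $F$ preserves the area form, $\det S_p=1$, so its eigenvalues are $\lambda,1/\lambda$ with $\lambda\geq 1$; one has $\lambda=1$ precisely when $dF_p$ is an isometry, i.e.~at the conformal points of $F$. Away from the set $C$ of conformal points, $S_p$ has two distinct orthogonal eigenlines, and since the eigenvalues are globally ordered by size, the two lines give two well-defined orthogonal continuous line fields on $S^2\setminus C$ whose common singular set equals $C$. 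If $C$ is not discrete the conjecture is immediate, so one may assume $C$ is finite.

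Next, apply the Poincar\'e--Hopf theorem for line fields: the sum of the half-integer indices at the singularities equals $\chi(S^2)=2$, so $C$ is nonempty. The algebraic count of conformal points with vector-field multiplicity (twice the line-field index) is $4$, consistent with the fact that the antilinear part of $dF$, interpreted as a section of an appropriate complex line bundle over $S^2$, has degree $2c_1(TS^2)=4$. This is the easy, purely topological half of the argument, and it exactly parallels the role played by the Gauss map in the classical Carath\'eodory setting.

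The hard step is to strengthen ``at least one'' to ``at least two distinct,'' that is, to rule out a single conformal point carrying the entire index $2$. The cleanest sufficient condition is the local bound that the line-field index of every isolated conformal point is at most $1$; combined with the index sum $2$ and the half-integer quantization, this forces $|C|\geq 2$. This bound is exactly the analog, for area-preserving diffeomorphisms of $S^2$, of the unresolved local Loewner/Carath\'eodory conjecture invoked in the introduction. I would attack it along two routes. First, a reduction to the infinitesimal case of Theorem~\ref{thm:two}: near a candidate high-index conformal point one linearizes $F$, compares the leading-order term with the Hamiltonian vector field situation, and tries to transport the bound while controlling the higher-order remainder. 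Second, pass to isothermal coordinates near a conformal point, express the conformality condition as the vanishing of the antilinear part of $dF$, and relate the resulting object to the $V_2$-field of Section~\ref{subsec:L} so that partial results toward Loewner's conjecture become applicable. Either way, it is this local index bound, not the global Poincar\'e--Hopf step, that is the genuine difficulty, and the topological framework above supplies everything else.
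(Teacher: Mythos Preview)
The statement you are addressing is labeled \emph{Conjecture} in the paper, and the paper offers no proof of it; it is presented as a generalization of the Carath\'eodory conjecture, motivated by Lemmas~\ref{lm:confpt} and~\ref{lm:umb}. So there is no ``paper's own proof'' to compare against.

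Your proposal is not a proof either, and you essentially say so yourself. The global part---defining $S_p=(dF_p)^*dF_p$, extracting the ordered eigenline fields on $S^2\setminus C$, and invoking Poincar\'e--Hopf for line fields to get total index $2$---is correct and is exactly the mechanism behind the umbilic count on a convex surface. It yields at least one conformal point and an algebraic count of four, just as in the classical situation. But, as you explicitly acknowledge, passing from ``nonempty'' to ``at least two distinct'' requires the local bound that no isolated conformal point carries line-field index exceeding $1$. That bound is precisely the analog of the local Loewner/Carath\'eodory conjecture that the paper cites as open, and neither of your two suggested routes (linearization near a conformal point, or rewriting the antilinear part of $dF$ as a $V_2$-type field) resolves it: the first reduces to the infinitesimal case only at leading order and gives no control on how higher-order terms affect the index, and the second lands you squarely on the $n=2$ Loewner conjecture itself. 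In short, your write-up is a clear and accurate reduction of Conjecture~\ref{con:4} to the same unresolved local index problem that already underlies Carath\'eodory; it clarifies the structure of the difficulty but does not remove it.
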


\begin{example}
{\rm Consider the particular case of Theorem \ref{thm:twoR} for the spherical metric. 
Assume that the support function has vanishing differential along a curve $\g \subset S^2$. Then the respective surface $N$ is tangent to a sphere along  $\g$. The normal  lines to $N$ along $\g$ form a cone, a developable surface. 

It is a classical fact 
that the normals to a surface along a curve form a developable surface if and only if the curve is a line of curvature, see, e.g., \cite{St}.
(Indeed, the normal lines at infinitesimally close points of a line of curvature intersect, and the locus of these intersection points is a curve in $\R^3$. Then the surface comprising the normals is the tangent developable of this space curve, a developable surface.) 

It follows that $\g$ is a closed line of curvature of $N$, and hence there there is an umbilic point in each of the two components of its complement.
}
\end{example}

Let us also interpret umbilics in terms of the space of oriented lines. This interpretation works equally well in the spherical and hyperbolic geometries. Let $N\subset \R^3$ be a cooriented surface.

\begin{lemma} \label{lm:norm}
The umbilic points of $N$ correspond to the complex points of ${\mathcal L}(N) \subset {\mathcal G}(\R^3)$.
\end{lemma}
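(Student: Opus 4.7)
The plan is to read the lemma directly off the K\"ahler/line-geometry dictionary summarized in Section \ref{subsec:lines}. Two facts recalled there do essentially all of the work. First, the induced indefinite K\"ahler metric on the Lagrangian $\mathcal{L}(N)\subset\mathcal{G}(\R^3)$ is either Lorentzian or null, the null case being by definition the locus of complex points of $\mathcal{L}(N)$. Second, the intersection of the train quadratic cone in $T_\ell\mathcal{G}(\R^3)$ with $T_\ell\mathcal{L}(N)$ is the light cone of this induced indefinite metric, and at a normal line $\ell$ above a point $p\in N$ its two light directions correspond, under the differential of the normal-line parametrization $N\to\mathcal{L}(N)$, to the two principal directions of $N$ at $p$.

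First I would unwind the umbilic condition in the light of these two facts. A point $p\in N$ is umbilic iff the two principal curvatures coincide there, equivalently iff the principal directions at $p$ become indeterminate, i.e.\ every tangent direction at $p$ is principal. Under the correspondence just recalled, this says exactly that every direction in $T_\ell\mathcal{L}(N)$ is a light direction of the induced metric; but the light cone sweeps out all of a two-plane iff the metric restricted to that plane is degenerate, i.e.\ null. Combined with the first recalled fact, this degeneracy is precisely the condition that $\ell$ is a complex point of $\mathcal{L}(N)$, which gives both implications of the lemma. The same dictionary works verbatim for $H^3$ and $S^3$ in place of $\R^3$, so the argument is not specific to the Euclidean setting.

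I do not expect a real obstacle; once Section \ref{subsec:lines} is granted, the lemma is essentially a tautology. The one minor subtlety worth spelling out is the linear-algebra fact used implicitly: in a real four-dimensional split K\"ahler vector space, the induced metric on a Lagrangian two-plane is degenerate iff the plane is $J$-invariant (i.e.\ a complex line). This follows from a direct computation in a Darboux-type frame adapted to the K\"ahler structure on $\mathcal{G}(\R^3)$, or alternatively by noting that a Lagrangian plane on which the metric has a null vector $v$ must contain $Jv$ as well (since $g(Jv,Jv)=g(v,v)=0$ and $\omega(v,Jv)=g(v,v)=0$), whence $J$-invariance. This is the only bookkeeping step that goes beyond quoting the dictionary.
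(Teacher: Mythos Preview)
Your main argument is correct and takes a genuinely different route from the paper's. The paper argues via osculating spheres: a point of $N$ is umbilic iff the best second-order approximation to $N$ there is a sphere $S$, so $\mathcal{L}(N)$ and $\mathcal{L}(S)$ share a tangent plane at the corresponding normal line; since $\mathcal{L}(S)$ (the set of lines through the center of $S$) is a complex curve in $\mathcal{G}(\R^3)$, that tangent plane is $J$-invariant, i.e.\ the point is complex. Your argument instead runs the principal-direction/light-direction dictionary directly: umbilic $\Leftrightarrow$ all tangent directions principal $\Leftrightarrow$ all directions in $T_\ell\mathcal{L}(N)$ are light $\Leftrightarrow$ the induced metric is null $\Leftrightarrow$ $\ell$ is a complex point. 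Both proofs are essentially one-liners once Section~\ref{subsec:lines} is granted; the paper's is a bit more geometric and reuses the ``lines through a point form a complex curve'' observation, while yours is more linear-algebraic and makes the converse direction visible without a separate step.

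One caution about your closing paragraph. The identity $\omega(v,Jv)=g(v,v)$ you invoke presupposes that $(J,g,\omega)$ form a compatible triple. But Section~\ref{subsec:lines} explicitly warns that the complex structure on $\mathcal{G}(\R^3)$ is \emph{not} compatible with the canonical symplectic structure for which $\mathcal{L}(N)$ is Lagrangian; indeed, the lines through a fixed point are simultaneously Lagrangian and complex, which would be impossible if $\omega(\cdot,\cdot)=g(J\cdot,\cdot)$ with $g$ nondegenerate on complex lines. So your side computation does not go through as written. Fortunately it is unnecessary: the equivalence ``induced metric null $\Leftrightarrow$ complex point of $\mathcal{L}(N)$'' is already asserted in Section~\ref{subsec:lines}, and your main argument only quotes it. You should simply drop that last paragraph rather than try to repair it.
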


\begin{proof}
A point of $N$ is umbilic if and only if the quadratic surface that approximates $N$ at this point up to the second derivatives is a sphere, say $S$. That is, ${\mathcal L}(N)$ and ${\mathcal L}(S)$ are tangent at the respective point. But ${\mathcal L}(S)$ is a complex curve in ${\mathcal G}(\R^3)$, and the result follows.
\end{proof}

Thus (a slightly generalized) formulation of the Carath\'eodory conjecture is that a Lagrangian section of $T^* S^2$ possesses at least two distinct complex points.

\begin{remark} \label{rmk:comp}
{\rm There exists a substantial literature concerning complex points of real surfaces. For example, consider an oriented closed surface $M$ embedded in $\C^2$. The generic complex points of $M$ are classified into four types: positive or negative (the orientation of the tangent plane is complex or not), and elliptic or hyperbolic, see \cite{CS,Bi} and \cite{Be}, pp. 137--139. One has the topological relations:
$$
e_+ -h_+=e_- - h_- = \frac{1}{2} \chi(M),
$$
where $\chi(M)$ is the Euler characteristic. In particular, if $M\subset \C^2$ is a sphere in general position, then there exist at least two elliptic complex points.}
\end{remark}

\bigskip

{\bf Acknowledgements}. Many thanks to D. Panov and A. Petrunin for useful discussions. 

PA acknowledge funding by the Deutsche Forschungsgemeinschaft (DFG, German Research Foundation) through Germany’s Excellence Strategy EXC-2181/1 - 390900948 (the Heidelberg STRUCTURES Excellence Cluster), the Transregional Colloborative Research Center CRC/TRR 191 (281071066). ST was supported by NSF grant DMS-2005444 and by a Mercator fellowship within the CRC/TRR 191, and he thanks the Heidelberg University for its hospitality.

\end{document}